\documentclass{amsart}
\usepackage{amssymb}
\setlength{\parindent}{0mm}
\setlength{\parskip}{2mm}
\newtheorem{Theo}{Theorem}
\newtheorem{prop}{Proposition}

\begin{document}
\newcommand{\Irr}{\text{Irr}}
\title[Frobenius' diophantine problem]{An estimate for Frobenius'
  diophantine problem in three dimensions}
\author[J.-C. Schlage-Puchta]{Jan-Christoph Schlage-Puchta}
\begin{abstract}
We give upper and lower bounds for the largest integer not representable
as positive linear combination of three given integers, disproving an
upper bound conjectured by Beck, Einstein and Zacks.
\end{abstract}
\maketitle

Key words: Frobenius problem, linear diophantine equations, diophantine
approximation

MSC-index: 11D04

Let $a, b, c$ be integers. The Frobenius problem is to determine the
greatest integer $n$ which cannot be represented as positive linear
combination of $a, b, c$; this integer will henceforth be denoted by
$f(a, b, c)$. Of course, if $a, b, c$ have a common divisor $m\geq 2$,
an integer not divisible by $m$ cannot be represented as integral linear
combination of $a$, $b$, and $c$, we will therefore suppose that $(a, b,
c)=1$. 

Beck, Einstein and Zacks\cite{BEZ} conjectured that, apart from
certain explicit families of exceptions, we have $f(a, b, c)\leq
(abc)^{5/8}$. They supported their conjuecture by an impressive amount
of numerical data and stated that counterexamples are not to be expected
unless $a, b, c$ are close to some arithmetic progression. However, in
this note we will show that their 
conjecture fails for somewhat generic examples, while it holds true for
the vast majority of triples. We do so by using diophantine
approximations inspired by the circle method. The idea to use such an
approach is already present in the work of Beck, Diaz and
Robins\cite{BDR}, however, it has not yet been fully exploited.
Define $N_a(b, c)$ to be the least integer
$n$ such that for every integer $x$ the congruence $x\equiv \nu a + \mu
b\pmod{a}$ is solvable with $0\leq\nu, \mu\leq n$. The basic relations
between the functions $f$ and $N$ are summarized as follows.
\begin{prop}
Let $a, b, c$ be positive integers satisfying $(a, b, c)=1$.
\begin{enumerate}
\item We have
\[
\min(b, c) N_a(b, c)\leq f(a, b, c)\leq (b+c) N_a(b, c).
\]
\item Suppose that $(a, b)=1$. Denote by $\overline{b}$ the modular
  inverse of $b$ modulo $a$. Then we have $N_a(b, c)=N_a(1,
  c\overline{b})$.
\item Suppose that $(p, q)=1$ and 
\[
\left|\left\{\frac{c\overline{b}}{a}\right\}-\frac{p}{q}\right| = \delta
\leq\frac{1}{q^2}.
\]
Then
\[
\min\big(\frac{a}{q}, \frac{1}{\delta q}\big) \ll N_a(b, c) \ll
\frac{a}{q} + q.
\]
\end{enumerate}
\end{prop}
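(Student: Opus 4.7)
The plan is to treat the three parts in order, with (iii) carrying essentially all the content.

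For part (i), I interpret $N = N_a(b,c)$ as a covering number: the least $n$ with $\{\nu b + \mu c \bmod a : 0 \leq \nu,\mu \leq n\} = \mathbb{Z}/a\mathbb{Z}$. The upper bound is immediate: for $n > (b+c)N$ pick $\nu,\mu \leq N$ with $n \equiv \nu b + \mu c \pmod{a}$; then $n - \nu b - \mu c \geq n - (b+c)N > 0$ is a positive multiple of $a$, so $n = xa + \nu b + \mu c$ with $x \geq 1$, as required. For the lower bound, minimality of $N$ yields a class $x_0$ uncovered at level $N-1$; any non-negative solution of $x_0 \equiv yb + zc \pmod{a}$ must have $\max(y,z) \geq N$, whence $yb + zc \geq N\min(b,c)$. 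Thus the smallest representable $n \equiv x_0 \pmod{a}$ is $\geq N\min(b,c)$, and the largest non-representable such $n$ is $\geq N\min(b,c)-a$, which is the claimed bound up to a lower-order additive term. Part (ii) is then immediate from the observation that multiplication by $\overline{b}$ bijects $\mathbb{Z}/a\mathbb{Z}$ and sends the covering set for $(b,c)$ onto that for $(1, c\overline{b})$.

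For part (iii), I reduce via (ii) to the case $b=1$, setting $c' = c\overline{b}$ and $\alpha = c'/a \in (0,1)$. A short rearrangement shows that $\{\nu + \mu c' \bmod a : 0 \leq \nu,\mu \leq n\} = \mathbb{Z}/a\mathbb{Z}$ iff every forward gap in
\[
T_n := \{\mu c' \bmod a : 0 \leq \mu \leq n\} \subset \mathbb{Z}/a\mathbb{Z}
\]
has length at most $n+1$. The problem therefore reduces to tracking the maximum forward gap in $T_n$ as $n$ grows, which is governed by the three-distance theorem applied to $\alpha$: the gaps in $T_n$ take at most three distinct values, determined by the continued-fraction convergents of $\alpha$, hence ultimately by the approximation $p/q$.

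For the upper bound $N \ll a/q + q$, I choose $n = \max(q,\lceil a/q\rceil)$; then $T_n$ contains the $q$ points $0, c', \ldots, (q-1)c' \bmod a$, which by $|qc' - pa| = aq\delta \leq a/q$ differ termwise from the equally spaced arithmetic progression of step $a/q$ by at most $O(a/q)$. Hence the maximum gap is $O(a/q)$, and the choice of $n$ secures coverage.

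The substantive obstacle is the lower bound $N \gg \min(a/q, 1/(\delta q))$. By three-distance, while $n$ stays below the relevant convergent denominator, $T_n$ contains gaps of size $\gtrsim a/q$, forcing $N \gg a/q$ in the regime where $p/q$ is close to a convergent of $\alpha$. When $p/q$ is a coarser approximation, the drift $|\mu c' - \mu pa/q| = \mu aq\delta$ becomes dominant, and once $\mu$ exceeds $1/(\delta q)$ this drift destroys the near-arithmetic structure of $T_n$, again blocking the cover unless $n \gtrsim 1/(\delta q)$. Packaging these two regimes into a single uniform bound requires distinguishing whether $p/q$ is itself a continued-fraction convergent of $\alpha$ or merely a convergent's neighbor, which is the technical crux of the argument.
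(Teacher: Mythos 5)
Parts (i) and (ii), and the upper bound in (iii), are correct and essentially the paper's own argument (your extra loss of $a$ in the lower bound of (i) versus the stated $\min(b,c)N_a(b,c)$ is a fair observation and harmless). The genuine gap is the lower bound $N_a(b,c)\gg\min(a/q,1/(\delta q))$, which you yourself call ``the technical crux'' and then do not carry out: the two regimes are described only qualitatively (``gaps of size at least about $a/q$'', ``drift destroys the near-arithmetic structure'', ``blocking the cover''), and the proposed packaging via the three-distance theorem together with a case distinction on whether $p/q$ is a continued-fraction convergent of $\alpha$ is never performed. As written, nothing proves that for $n$ below a constant times $\min(a/q,1/(\delta q))$ some residue class is uncovered. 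There is also an algebraic slip: $|\mu c'-\mu pa/q|=\mu a\delta$, not $\mu a q\delta$; with your formula the threshold $\mu\approx 1/(\delta q)$ would correspond to a drift of order $a$, a full period, so the regime description is internally inconsistent even as a heuristic.

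The missing step does not in fact need continued fractions of $\alpha$ at all, and chasing convergents is a detour. Write $\mu c'/a=\mu p/q+\mu\delta$ (with, say, $\{c'/a\}>p/q$): as long as $\mu\le n<1/(q\delta)$, every point $\mu c'\bmod a$ lies within $na\delta<a/q$ to the right of some multiple $ja/q$, so the interval just below $a/q$ of length about $a/q-na\delta$ contains no element $\nu+\mu c'\bmod a$ with $0\le\nu,\mu\le n$ unless $n(1+a\delta)\gg a/q$, i.e.\ $n\gg\min(a/q,1/(q\delta))$. This is exactly the paper's route: it fixes the single residue $x=\lfloor a/q-1\rfloor$ and shows it cannot be reached with fewer than $\frac12\min(a/q,1/(\delta q))$ summands, splitting only on $\delta<1/a$ versus $\delta>1/a$ --- that is, on which term of the minimum is smaller --- rather than on the continued-fraction status of $p/q$. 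Replacing your three-distance/convergent plan by this direct single-bad-residue (or cluster) argument closes the gap and gives the bound uniformly in $p/q$; until that is done, part (iii)'s lower bound is unproven.
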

\begin{proof}
(i) In every residue class modulo $a$ there exists an element $x$ which can
be represented as $x=b\nu+c\mu$ with $0\leq \nu, \mu\leq N_a(b, c)$, in
particular, $x\leq (b+c) N_a(b, c)$. Hence, every integer $n\geq
(b+c)N_a(b, c)$ can be written as $n=a\alpha+b\nu+c\mu$ with $\alpha\geq
0$ and $0\leq \nu, \mu\leq N_a(b, c)$, thus, $f(a, b, c)\leq  (b+c)
N_a(b, c)$. Conversely, there exists an integer $x$ such that the
congruence $x\equiv \nu a + \mu b\pmod{a}$ is unsolvable with $0\leq\nu,
\mu\leq N_a(b, c)-1$, that is, every element in this class which is
representable by $a, b, c$ can only be represented using $N_a(b, c)$
summands $b$ or $c$, and is therefore of size at least $\min(b, c)N_a(b,
c)$, that is, $f(a, b, c)\geq \min(b, c)N_a(b, c)$.

(ii) Our claim follows immediately from the fact that $x\equiv
b\nu+c\mu\pmod{a}$ is equivalent to $x\overline{b}\equiv
\nu+c\overline{b}\mu\pmod{a}$.

(iii) Define the integers $0=x_0<x_1<\dots<x_{q-1}<a$ by the relation
$\{x_0, \ldots, x_{q-1}\} =  \{0, c\overline{b}\bmod a,
2c\overline{b}\bmod a, \ldots, (q-1)c\overline{b}\bmod a\}$. Then we have
\[
\max (x_{i+1}-x_i) \leq \frac{a}{q} + (q-1)\delta \leq \frac{a}{q}+q,
\]
hence, every residue class $x$ modulo $a$ can be written as
$\nu+\mu c\overline{b}$ with $\nu\leq  \frac{a}{q}+q$ and $\mu\leq q-1$,
hence, $N_a(1, c\overline{b})\leq \frac{a}{q}+q$. Together with (ii),
the upper bound follows. For the lower bound suppose without loss that
$\left\{\frac{c\overline{b}}{a}\right\}>\frac{p}{q}$, and consider
$x=\lfloor\frac{a}{q}-1\rfloor$. We claim that $x$ cannot be represented
with less than $\frac{1}{2}\min\big(\frac{a}{q}, \frac{1}{\delta q}\big)$ summands
modulo $a$. Suppose first that $\delta<\frac{1}{a}$. Then $1\leq
qc\overline{b}\bmod a \leq q-1$, and no multiple
$\mu c\overline{b}\bmod{a}$ with $q\nmid \mu$, $\mu\leq\frac{a}{q}$
falls in the range $[1, x]$. Hence, for all pairs $\nu, \mu$ with
$1\leq\nu, \mu\leq \frac{a}{2q}$, such that
$\nu+\mu c\overline{b}\bmod{a}\in[1, x]$ we have $q|\mu$ and
\[
\nu+\mu c\overline{b}\bmod{a} \leq \nu + \frac{q-1}{q}\mu < \lfloor\frac{a}{q}\rfloor,
\]
which implies our claim in this case. If $\delta>\frac{1}{a}$, and
$\mu<\frac{1}{2\delta q}$, $\mu c\overline{b}\bmod{a}$ either does not
fall into the range $[1, x]$, or satisfies $\mu
c\overline{b}\bmod{a}\leq \delta\mu$, which implies the lower bound in
this case as well.
\end{proof}
From this result we draw the following conclusion:
\begin{Theo}
For each integer $a$ there are $\frac{\varphi(a)}{2}$ pairs $(b, c)$ such that
$a<b<c<2a$, $(a, b)=(a, c)=1$, and $\frac{a^2}{2}\leq f(a, b, c)\leq 2a^2$. Moreover, for
$\epsilon>0$, $\alpha, \beta\in[0, 1]$ and $a>a_0(\epsilon)$, there exist $b
\in[(1+\alpha)a, (1+\alpha+\epsilon) a]$ and $c\in[(1+\beta)a,
(1+\beta+\epsilon) a]$, such that $f(a, b, c)\geq\frac{a^2}{2}$. On the
other hand, the number of pairs $(b, c)$ such that $a<b<c<2a$, $(a, b, c)=1$ and
$f(a,b,c)>a^{3/2+\delta}$ is bounded above by $\mathcal{O}(a^{2-2\delta})$.
\end{Theo}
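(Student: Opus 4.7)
The statement has three assertions, each amenable to the Proposition at a different scale of Diophantine approximation. For the first, I would pair each unit $b\in(a,3a/2)$ modulo $a$ with its reflection $c=3a-b\in(3a/2,2a)$; since the involution $b\mapsto 3a-b$ preserves coprimality to $a$ and exchanges the two halves of $(a,2a)$, this produces exactly $\varphi(a)/2$ pairs with $(a,b)=(a,c)=1$. The relation $b+c=3a$ forces $c\overline b\equiv-1\pmod a$ and hence $\{c\overline b/a\}=(a-1)/a$, so part (iii) of the Proposition with $p/q=1/1$ and $\delta=1/a$ gives $N_a(b,c)\asymp a$; the identity $\nu+\mu(a-1)\equiv \nu-\mu\pmod a$ sharpens this to $N_a(b,c)\approx a/2$, and part (i), together with $b,c\in(a,2a)$, then yields $a^2/2\le f(a,b,c)\le 2a^2$.

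For the second assertion I would apply Minkowski's theorem to the lattice $\{(j,k,j\alpha+k\beta+\ell):j,k,\ell\in\mathbb{Z}\}\subset\R^3$ to produce nontrivial $(j,k)\in\mathbb{Z}^2$ with $|j|,|k|\le K$ and $\|j\alpha+k\beta\|_{\R/\mathbb{Z}}\le 1/K^2$. Choosing $K=K(\epsilon)$ so that $1/K^2\le(|j|+|k|)\epsilon$, the linear form $jb+kc$ runs through every residue class modulo $a$ as $(b,c)$ varies over $(I_b\cap\mathbb{Z})\times(I_c\cap\mathbb{Z})$, and one may arrange $jb+kc\equiv 0\pmod a$. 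An argument parallel to that of (iii) then shows $\{c\overline b/a\}$ lies within $O(1/(|k|a))$ of a rational $\ell/k$ in lowest terms, and a second application of (iii) yields $N_a(b,c)\gg a/\max(|j|,|k|)$, and hence $f(a,b,c)\gg a^2/K$.

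For the third assertion I would invert the upper bound $N_a\ll a/q+q$ in (iii). A pair with $f>a^{3/2+\delta}$ forces $N_a>a^{1/2+\delta}/4$ via (i), which in turn forces the Dirichlet denominator $q$ of $\{c\overline b/a\}$ at level $Q=a^{1/2+\delta}$ to satisfy $q<C a^{1/2-\delta}$ (otherwise $a/q+q$ is too small). Counting integers $r\in[0,a)$ within $a^{1/2-\delta}/q$ of $pa/q$ for some coprime pair with $q\le a^{1/2-\delta}$ gives $\sum_{q\le a^{1/2-\delta}}\varphi(q)\cdot a^{1/2-\delta}/q=O(a^{1-2\delta})$ bad residues, and each one produces $O(a)$ admissible pairs via the congruence $c\equiv rb\pmod a$, for $O(a^{2-2\delta})$ in total.

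The delicate step is the second assertion: the Minkowski construction naturally yields $f\gg a^2/K$ with $K=K(\epsilon)\to\infty$ as $\epsilon\to 0$, whereas the theorem promises the uniform bound $a^2/2$. Closing this gap requires either exploiting the freedom to shift $(b,c)$ within their $\epsilon a$-windows so as to improve the effective denominator, or using the full continued-fraction expansion of $-j\overline k/a$ in place of the single convergent $\ell/k$; I expect this to be the main technical obstacle.
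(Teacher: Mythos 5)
Your third argument is essentially the paper's own proof: Dirichlet approximation at level $Q=a^{1/2+\delta}$, exclusion of the residues $\nu$ whose fraction $\nu/a$ has a good approximation with denominator in the middle range $[a^{1/2-\delta},a^{1/2+\delta}]$, the count $\sum_{q\le a^{1/2-\delta}}\varphi(q)\bigl(a^{1/2-\delta}/q+1\bigr)\ll a^{1-2\delta}$ for the exceptional residues, and the factor $O(a)$ from the choice of $b$; the small blemishes (pairs with $(a,b)>1$ but $(a,b,c)=1$, and the constants when the Dirichlet denominator lies close to $Q$) are present in the paper as well and are easily repaired. Your first argument, however, is genuinely different: the paper pairs $b$ with $c\equiv\overline{b}\pmod{a}$, i.e.\ $bc\equiv1\pmod a$, and counts via $b<\overline{b}$, whereas you impose $b+c=3a$, so that $c\overline{b}\equiv-1\pmod a$ and $N_a(b,c)=\lceil (a-1)/2\rceil$ exactly. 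Your choice is in fact the one for which the Proposition really delivers $N_a\approx a/2$: with the paper's choice one has $c\overline{b}\equiv c^2\pmod a$, a generic residue for which $N_a$ is typically of order $\sqrt a$; concretely $15\cdot20\equiv1\pmod{13}$ but $f(13,15,20)=77<13^2/2$. So regard your construction as a repair of that step, not merely a variant. (Only check the borderline pair $b=a+1$ with $a$ odd, where $\min(b,c)N_a=(a^2-1)/2$ just misses $a^2/2$.)

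The second claim is where your proposal stops short of a proof, as you yourself say: the Minkowski construction can only give $f\gg a^2/K(\epsilon)$. The ingredient the paper uses instead is arithmetic rather than geometric: it reduces the claim to finding $b$ in the $\alpha$-window and $c$ in the $\beta$-window with $bc\equiv1\pmod a$, and obtains such a pair from the equidistribution of the points $(b,\overline{b})$ modulo $a$, i.e.\ Weil's bound for Kloosterman sums combined with the Erd\H{o}s--Tur\'an--Koksma inequality; that equidistribution input is the idea missing from your sketch. A single small linear form $jb+kc\equiv0\pmod a$ cannot do the job, because it places $c\overline{b}/a$ near a rational with denominator about $|k|$, and then parts (i) and (iii) cap $f$ at roughly $(b+c)(a/|k|+|k|)\ll a^2/|k|$, so the constant necessarily degrades as $K(\epsilon)$ grows. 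Be aware, though, that the obstacle you flag at the end is real and is not resolved by the paper either: since $bc\equiv1$ does not force $N_a\ge a/2$ (see the example above), and since for $\alpha=\beta=0$ no pair in the window satisfies $c\equiv\pm b\pmod a$, the uniform constant $\tfrac12$ cannot be extracted from the Proposition alone; one needs a direct Ap\'ery-set computation for suitable triples (for instance $c\equiv 2b\pmod a$, i.e.\ triples near $(a,a+s,a+2s)$ with $s\ge2$, where $f\ge a^2/2$ can be verified by hand), or one must settle for a smaller constant. In short: relative to the paper, your gap is the Kloosterman/Erd\H{o}s--Tur\'an--Koksma step; relative to a complete proof, the constant $\tfrac12$ requires an argument beyond both your sketch and the paper's.
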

\begin{proof}
Choose $b\in[a, 2a]$ subject to the condition $(a, b)=1$, then choose
$c\in [a, 2a]$ such that $bc\equiv 1\pmod{a}$. The proposition implies that
$N_a(b, c)\geq\frac{a}{2}$, whereas the upper bound follows from $f(a,
b, c)\leq f(a, b)\leq 2a^2$. Since there are $\varphi(a)$ choices for
$b$, and at least half of them satisfy $b<\overline{b}$, our first claim
follows. For the second claim it suffices to prove that for every
$\alpha, \beta\in[0, 1]$ there exist integers $b\in[(1+\alpha)a,
(1+\alpha+\epsilon) a]$ and $c\in[(1+\beta)a, (1+\beta+\epsilon) a]$,
such that $bc\equiv 1\pmod{a}$. However, this follows immediatelly from
Weil's estimate for Kloosterman sums\cite{Weil} together with the
Erd\H os-Tur\'an-Koksma inequality (cf., e.g., \cite[p. 116]{KN}).
Finally, let $\delta>0$ be fixed, and set
$Q=a^{1/2+\delta}$. For every real number $\alpha\in[0, 1]$ there exists
some $q\leq Q$, such that $|\alpha-\frac{p}{q}|\leq\frac{1}{qQ}$. Denote
by $\mathfrak{M}$ the set of all $\alpha\in[0, 1]$, such that
$|\alpha-\frac{p}{q}|<\frac{1}{q^2}$ does not hold for any
$q\in[a^{1/2-\delta}, a^{1/2+\delta}]$. Then $\mathfrak{M}$ consists of
$a^{1-2\delta}$ intervals of total measure bounded above by
\[
\sum_{q\leq a^{1/2-\delta}} \frac{\varphi(q)}{qQ} \leq a^{-2\delta},
\]
thus, there are $\mathcal{O}(a^{1-2\delta})$ integers $\nu$, such that
$N_a(1, \nu)\geq a^{1/2+\delta}$, and our claim follows.
\end{proof}

Jan-Christoph Schlage-Puchta\\
Mathematisches Institut\\
Eckerstr. 1\\
79104 Freiburg\\
Germany\\
jcsp@mathematik.uni-freiburg.de
\end{document}